\documentclass[12pt]{article}
\usepackage[utf8]{inputenc}
\usepackage{amsfonts,amssymb,mathrsfs,amsmath,amscd,comment}

\numberwithin{equation}{section}

\newtheorem{theorem}{Theorem}[section]

\newtheorem{definition}{Definition}[section]

\newtheorem{remark}{Remark}[section]
\newtheorem{proof}{Proof}

\newcommand{\ep}{\varepsilon}            

\newcommand{\oo}{\overline{o}}
\newcommand{\ept}{{\ep_T}}

\makeatletter
\def\@fnsymbol#1{\ensuremath{\text{\number#1}}}
\makeatother

\begin{document}

\title{On large deviation principles for general random processes%
\footnote{Research supported by the Fundamental Scientific Research Program SB RAS, grant FWNF-2026-0030.}}

\author{Borovkov~A.~A.\!\footnote{Sobolev Institute of Mathematics, Siberian Branch of the Russian Academy of Sciences, 4 Akademika Koptyuga Prospekt, 630090 Novosibirsk,
Russian Federation; e-mail:  borovkov@math.nsc.ru.}\ {} and 
Borovkov~K.~A.\!\footnote{School of Mathematics and Statistics, The University of Melbourne, Melbourne, Australia; e-mail: borovkov@unimelb.edu.au.}}

\markboth{Borovkov~A.~A., Borovkov~K.~A.}{On large deviation principles}

\date{}

\maketitle

\begin{abstract} 
Let $Z=\{Z(t): t\in \mathbb R\}$ be a stochastic process with trajectories in  space $\mathbb D (\mathbb R)$. It is assumed that there exists an essentially smooth function $A:\mathbb R\to (-\infty, \infty] $ such that, for all
$\alpha \in \mathbb   R, $ $ \mu\in \mbox{\rm dom}\, A$, one has
\begin{equation*}
\frac1{T} \ln  {\mathbf E} \big( e^{\mu (Z(T)-\alpha T)} \big|Z(s), \ s\le 0 \big)
= A(\mu) +o(1) 
\end{equation*}
uniformly on the event $C(T):=\{|Z(0)/T - \alpha |< \eta_T \} $, where $ \eta_T \to 0$ as $T\to\infty.$ Under this condition, a uniform conditional local large deviation principle (l.l.d.p.) is established: for any fixed $\alpha, \beta\in \mathbb R$ and a positive function $\eta_T=o(1)$, for $\varepsilon_T \to 0$ sufficiently slowly as $T\to\infty,$ one has
\begin{equation*}
\lim_{T\to\infty}\frac1T \ln {\mathbf P} \big( {Z(T)}/T-\alpha 
 \in (\beta-\varepsilon_T, \beta +\varepsilon_T) \big| Z(s), \ s\le 0\big)
= - D(\beta ) 
\end{equation*}
uniformly on $C(T)$, where $D$ is the Legendre transform of the function $A$. This result is used to establish a conditional l.l.d.p.\ for the finite-dimen\-sional distributions of the process $ \{ z_T(s) = Z(sT)/T: s\in [0,1]\}$. Under additional conditions on the magnitude of oscillations of the trajectories $z_T$, a functional l.l.d.p.\ is obtained for the asymptotics of $\ln {\mathbf P} (z_T\in (f)_{\varepsilon_T})$ as $T\to\infty$, where $f\in \mathbb D(0,1),$ $(f)_\varepsilon$ is the $\varepsilon$-neighborhood of $f$ in the space $ \mathbb D(0,1)$ with respect to the uniform metric, and $\varepsilon_T \to 0$ sufficiently slowly. The obtained results can be extended to a more general triangular array scheme where the process itself $Z=Z^{(T)}$ also depends on the parameter $T$.

\medskip 

{\em Key words and phrases}: large deviation principle, local large deviation principle, G\"artner--Ellis theorem, functional large deviation principle.
\smallskip

{\em AMS Subject Classification} (2020): Primary 60F10. Secondary
60F17.

\end{abstract}

\section{Introduction}
\label{s1}

A systematic study of large deviation probabilities for stochastic processes (primarily for the classical scheme of sums of independent identically distribu\-ted random variables) was initiated in the seminal paper~\cite{1}. Since that time, an extensive literature has emerged devoted to the theory of large deviations, both in the classical setting and in more general frameworks. Surveys of the development of this theory and its main results can be found in~\cite{2}--\cite{7}. One of the most common ways to characterize the behavior of large deviation probabilities is the {\em large deviation principle\/} (l.d.p.; this term appears to have been first used  in~\cite{8}).

Let  $Z=\{Z(t)\colon  t\in (-\infty, \infty)\} $  be a general univariate random process with trajectories in space  $\mathbb{D} (-\infty,\infty)$ of functions with no discontinuities of the second kind, that we will assume to be right-continuous for the definiteness sake. When studying asymptotic laws for   $\{Z(t)\colon t\in [ 0,T]\} $ as  $T\to\infty,$ one most often uses the construction of the process~$Z$, provided that it is known. In this note, we will not assume known the nature of the process~$Z$ but, rather, will only assume that some asymptotic law holds true for the values of  $Z(T)$ as $T\to\infty$. Since asymptotic laws  always involve some tolerance, we will be dealing not with a particular process~$Z$ but with  \textit{classes\/}  of asymptotically equivalent (in certain sense) processes. We will state broad conditions that guarantee that the l.d.p.\ holds for classes of processes~$Z$ that can be obtained, in particular, by adding ``noise'' to known original processes. In that case, the results we obtain here can be used to study conditions on the noise that would not invalidate the asymptotic laws for the original processes.
 
For well-studied random processes, such as compound renewal processes (c.r.p.'s)  that are ``driven'' by a sequence of random vectors, the key condition for the l.d.p.\ to hold is the moment Cram\'er condition on these ``driving'' vectors.  For a broader class of processes that we are dealing with here, such conditions can become meaningless. Then one can consider a somewhat different approach to solving the problem for the c.r.p.~$Z$ and assume that the Cram\'er condition is met for the process~$Z$ itself, i.e.\ assume the existence of the so-called \textit{fundamental function}
\begin{equation}
\label{eq1.1}
A (\mu ) : = \lim_{T\to\infty} \frac1T \ln \mathbf{E} e^{\mu Z(T)} \in (-\infty, \infty], \qquad \mu\in \mathbb{R}.
\end{equation}

It turns out that the existence of such function that is convex and possesses a number of further properties is necessary and sufficient for the l.d.p.\ for the increments of the c.r.p.\ $Z(T)$ (see~\cite{9}). The latter means that, for a wide class of Borel sets~$B$, there exists the limit 
$$
\lim_{T\to\infty} \frac1T \ln \mathbf{P} \biggl( \frac{ Z(T)}T\in B\biggr) = - D (B),
$$
where  $D (B):= \inf_{\alpha \in B} D (\alpha)$ and $D(\alpha) $ is the rate function for the c.r.p.\ that is equal to the Legendre transform of $A(\mu)$:
\begin{equation}
\label{eq1.2}
D (\alpha) =\mathcal L_{A} (\alpha) := \sup_\mu (\alpha \mu - A (\mu)), \qquad \alpha \in \mathbb{R}.
\end{equation}

Similarly, for general processes $Z$ one can use the moment Cram\'er condi\-tion for the distribution of the process~$Z$  \textit{itself},  i.e.\ assume that there exists the fundamental function  $A (\mu)$ defined in~\eqref{eq1.1} that is also convex. In that case, we will be dealing with the triangular array scheme, as the distributions of the variables 
$$
z(T) := \frac{Z(T)}{T}
$$
depend on the parameter~$T$.

{According to the G\"artner--Ellis theorem (\cite{10}, \cite{11}; see also  \cite[Section~2.3]{5})}, 
for the  l.d.p.\ to hold for the random variables  $ Z(T) $, where~$Z$ is a general random process, it is sufficient, as it was the case for the c.r.p., that there exists the fundamental function~\eqref{eq1.1}, the function  $A\colon \mathbb{R}\to (-\infty, \infty] $ being ``essentially smooth'', which means that the following three conditions are met:
\begin{itemize}
\item[1)] the interior  $(\operatorname{dom}A )$ of its effective domain 
$$
\operatorname{dom}A := \{\mu\in \mathbb{R} \colon A (\mu ) <\infty\}
$$
is non-empty;


\item[2)] $A$ is differentiable in~$(\operatorname{dom}A )$;

\item[3)] one has $\lim_{k\to\infty} |A'(\mu_k)|=\infty$ for any sequence  $\{\mu_k\}_{k\ge 1}\subset (\operatorname{dom}A )$ such that there exists the limit  $\lim_{k\to\infty} \mu_k =\mu \in \partial\, \mbox{dom}\, A$ (see e.g.\ \cite[\S\,26]{12} or \cite[Definition~2.3.5]{5}; note that if the domain  $\operatorname{dom}A$ is unbounded in some direction then the respective ``point'' $\pm \infty$ is not counted as part of $\partial\, \operatorname{dom}A$);
\end{itemize}
and, moreover, that  $0\in (\operatorname{dom}A )$.

Under these conditions, the rate function~$D $ in~l.d.p.\ equals  $\mathcal L_{A} (\alpha)$ and, furthermore, the ``inverse'' relation holds true as well: 
$$
A(\mu) =\mathcal L_{D} (\mu) := \sup_\alpha ( \mu \alpha -{D}(\alpha)), \qquad \mu \in \mathbb{R}.
$$

{There is also an assertion that is converse to the G\"artner--Ellis theorem that holds true under additional integrability conditions   (Varadhan's lemma; see e.g.\ \cite[Theorem~4.3.1]{5}). Namely, if the l.d.p.\  holds true for   $Z(T)$ with a rate function~$D$ that satisfies condition~\eqref{eq2.3} below and, for some   $\mu\in \mathbb{R}$ one has 
\begin{equation}
\label{eq1.3}
\lim_{M\to\infty} \limsup_{T\to\infty} \frac1T \ln \mathbf{E} \biggl(e^{\mu Z(T)}; \frac{\mu Z(T)}{T}\ge M \biggr) =-\infty,
\end{equation}
then, for that~$\mu$, there exists the limit~\eqref{eq1.1} which is equal to $\mathcal L_D (\mu)$. Moreover, if condition~\eqref{eq1.3} is satisfied with $\mu Z(T)$ replaced by  $T\phi (Z(T)/T)$, where  $\phi\colon\mathbb{R}\to\mathbb{R}$ is an arbitrary continuous function, then there will also exist the limit 
$$
\lim_{T\to\infty} \frac1T \ln \mathbf{E} e^{T \phi (Z(T)/T) } =\sup_{\alpha}(\phi (\alpha)- D(\alpha)).
$$

The main goal of the present note is the derivation of a functional l.d.p.\ for the increments of a general process~$Z$. From a formal viewpoint, an assertion of such kind might  be obtained as a consequence of the general version of the G\"artner--Ellis theorem in topological spaces (see e.g.~\cite[Section~4.5]{5}). However, verifying the condition of the existence of the fundamental functi\-onal possessing the required properties in the corresponding dual space appears to be hardly a feasible task. As an acceptable alternative, we establish the functional l.d.p.\ for the increments of the processes~$Z$ under broad assumptions on the existence of a common fundamental function for its increments. 
 
The papers consists of the following parts. In Section~\ref{s2} we present the definitions of the key concepts related to the l.d.p.\ and establish a uniform version of the conditional G\"artner--Ellis theorem for the increments of the general process~$Z$  under the assumption of their rough asymptotic indepen\-dence. These results are used in Section~\ref{s3} to prove the so-called  \textit{local} l.d.p.\ (l.l.d.p.) for the trajectories of the process $z_T(s) := Z(sT)/T$ in the functional space $\mathbb{D}:= \mathbb{D} [0,1]$ of functions without discontinuities of the second kind endowed with the uniform metric.  

From the proofs presented below one can see that it is not hard to extend all the results obtained in the paper (with obvious changes in the assertions' statements) to the more general case where the process $Z=Z^{(T)}$ itself also depends on the parameter~$T$ and can be multivariate. We restricted ourselves in this paper to considering the simplest case to simplify the exposition.

\section{Univariate large deviation principles}
\label{s2}

First we will give a formal definition for a concept that is important in the context of this paper. 

\begin{definition}
\label{d2.1}
{\rm
We will say that a statement~$\mathcal S$  whose formulation involves a function $\varepsilon=\varepsilon_T >0$ {\em is satisfied for $\ept $ that tends  to zero slowly enough  as\/} $T\to\infty $ (and will denote this property of the function~$\ep$ by writing  ``$\varepsilon =\oo (1)$ as $T\to\infty $'') if
\begin{enumerate}
\item[(i)] there exists a positive function  $\widetilde\varepsilon = \widetilde\varepsilon_T \to 0$ as $T\to\infty  $ such that   $\mathcal S$ holds true with $\varepsilon=\widetilde\varepsilon,$ and
\item[(ii)] the statement~$\mathcal S$ holds true for all functions
$\varepsilon_T \to 0 $ such that
$\varepsilon_T\ge   c \widetilde\ep_T $ for some constant  $c>0.$
\end{enumerate}
}
\end{definition}

\begin{remark}
\label{r2.1}
{\rm 
If the relation $\varepsilon_T \ge c \widetilde \varepsilon_T $ does not hold (for instance, $\varepsilon_T \ll \widetilde \varepsilon_T $ as $T\to\infty $), then assertion $\mathcal S$ for such a function~$\varepsilon_T $ does not need to hold. If, for example, $z(T)=\frac1T\sum_{1\le k\le T} \xi_k$, $T>0$, where $\{\xi_k\}_{k\ge 1}$ is a sequence of independent identically distributed random variables with $\mathbf{E} \xi_1=0$, $\mathbf{E} \xi_1^2<\infty$, then the relation  $\mathbf{P}(|z(T)|<\varepsilon)\to 1$ as $T\to\infty$ will hold true not only for any fixed  $\varepsilon>0$ but also for $\varepsilon=\varepsilon_T \to 0$ as $T\to\infty$ provided that $\varepsilon\gg T ^{-1/2}$, i.e.\ when $\varepsilon$ vanishes slowly enough  (slower than $ T ^{-1/2}$). In this case, in Definition~\ref{d2.1} one can take, say, $\widetilde\varepsilon_T :=T ^{-1/3}$.
}
\end{remark}

\subsection{Large deviation principles. Their equivalence}
\label{ss2.1}

Recall the definition of the  l.d.p.\ for a random process.

\begin{definition}
\label{d2.2}
{\rm 
We will say that the process $Z$ satisfies the l.d.p.\ if here exists a lower semi-continuous function  
$D\colon \mathbb{R} \to [0,\infty]$, $D\not\equiv \infty,$  such that, for any Borel set $B\subset \mathbb{R}$ the following inequalities hold true for the ratio $z(T)=Z(T)/T$:
\begin{align}
\liminf_{T\to\infty}\frac1{T}\ln\mathbf{P} (z(T)\in B) &\ge - D((B)),
\notag
\\
 \limsup_{T\to\infty}\frac1{T}\ln\mathbf{P} (z(T)\in B) &\le - D([B]),
\label{eq2.1}
\end{align}
where 
$$
D(B):=\inf_{\alpha \in B}D(\alpha),
$$
$(B)$ is the interior of the set $B$ and~$[B]$ is its closure.
}
\end{definition}

The function  $D$ is called the rate function for the process~$Z$. The probabilistic meaning of the deviation function is best understood from the already mentioned   l.l.d.p.\ that was introduced and studied in~\cite{13}, \cite{4}. Denote by  $(\alpha)_\varepsilon:=(\alpha -\varepsilon, \alpha +\varepsilon)$ the  $\varepsilon$-neighborhood of the point~$\alpha$.

\begin{definition}
\label{d2.3}
{\rm
We will say that the process $Z$ satisfies the~l.l.d.p. 
if there exists a function  $D\colon\mathbb{R} \to [0,\infty]$, $D\not\equiv \infty,$ such that, for any $\alpha \in \mathbb{R} $,  one has  
\begin{align}
\lim_{\varepsilon\searrow 0}\liminf_{T\to\infty}\frac1{T}\ln\mathbf{P} (z(T)\in (\alpha)_\varepsilon) & =- D(\alpha),
\notag
 \\
\lim_{\varepsilon\searrow 0}\limsup_{T\to\infty}\frac1{T}\ln\mathbf{P} (z(T)\in (\alpha)_\varepsilon) & = - D(\alpha).
\label{eq2.2}
\end{align}
}
\end{definition}

It was shown in~\cite{4} that if the process  $Z$ satisfies the l.l.d.p.\ then the function~$D$ in Definition~\ref{d2.3} is automatically lower semi-continuous. Moreover, it was also proved there that the l.l.d.p.\  is equivalent to the following: if $\varepsilon=\varepsilon_T =\overline{o}(1)$ as $T\to\infty$ then 
$$
\lim_{T\to\infty}\frac1{T}\ln\mathbf{P} (z(T)\in (\alpha)_{\varepsilon_T}) = - D(\alpha), \qquad \alpha\in \mathbb{R}.
$$

For the classical scheme of a random walk with independent identically distributed jumps, the l.d.p.\ and l.l.d.p.\ both hold under the moment Cram\'er condition for the jump's distribution  (see e.g.~\cite{2}).

The next assertion for general processes~$Z$ follows from Theorem~1.1 in~\cite{4}
under the assumption the the rate function is ``good'', i.e.\ when
\begin{equation}
\label{eq2.3}
\text{for any } v>0\text{ the set } \{\alpha\colon D(\alpha)\le v\} \text{ is compact}.
\end{equation}

\begin{theorem}
\label{t2.1}
In the general case, the l.d.p.\ always implies the l.l.d.p. Con\-versely, let the following condition be met: for any $N>0$ there exists a $v>0$ such that 
\begin{equation}
\label{eq2.4}
\limsup_{T\to\infty} \frac1T \ln \mathbf{P} (|z(T)|>v) \le -N.
\end{equation}
Then the  l.l.d.p.\ implies the l.d.p.
\end{theorem}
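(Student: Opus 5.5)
For the first assertion (l.d.p.\ $\Rightarrow$ l.l.d.p.) I will apply the inequalities \eqref{eq2.1} of Definition~\ref{d2.2} to the open interval $B=(\alpha)_\varepsilon$, whose interior is itself and whose closure is $[\alpha-\varepsilon,\alpha+\varepsilon]$. This gives
\[
-D((\alpha)_\varepsilon)\le \liminf_{T\to\infty}\frac1T\ln\mathbf P(z(T)\in(\alpha)_\varepsilon)\le\limsup_{T\to\infty}\frac1T\ln\mathbf P(z(T)\in(\alpha)_\varepsilon)\le -D([\alpha-\varepsilon,\alpha+\varepsilon]).
\]
As $\varepsilon\searrow0$ we have $D((\alpha)_\varepsilon)\le D(\alpha)$. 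Lower semicontinuity of $D$ gives $\lim_{\varepsilon\searrow 0} D([\alpha-\varepsilon,\alpha+\varepsilon])\ge D(\alpha)$, and the reverse inequality is trivial. Hence both limits in \eqref{eq2.2} equal $-D(\alpha)$, including the case $D(\alpha)=\infty$. So the same $D$ works in Definition~\ref{d2.3}, and $D\not\equiv\infty$ is inherited.

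For the converse, I will take $D$ from the l.l.d.p.\ and use the fact recalled above that it is automatically lower semicontinuous. The lower bound in \eqref{eq2.1} is local. If $\alpha\in(B)$ then $(\alpha)_\varepsilon\subset B$ for small $\varepsilon$, so
\[
\liminf_{T\to\infty}\frac1T\ln\mathbf P(z(T)\in B)\ge\lim_{\varepsilon\searrow0}\liminf_{T\to\infty}\frac1T\ln\mathbf P(z(T)\in(\alpha)_\varepsilon)=-D(\alpha).
\]
Taking the supremum over $\alpha\in(B)$ gives $-D((B))$.

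The upper bound is the main step, and there I will use \eqref{eq2.4}. Fix $N>0$ and choose $v$ as in \eqref{eq2.4}. Write
\[
\mathbf P(z(T)\in B)\le \mathbf P(z(T)\in K)+\mathbf P(|z(T)|>v),\qquad K:=[B]\cap[-v,v],
\]
where $K$ is compact. Fix $\delta>0$. For each $\alpha\in K$, the upper relation in \eqref{eq2.2} gives an $\varepsilon_\alpha>0$ with
\[
\limsup_{T\to\infty}\frac1T\ln\mathbf P\bigl(z(T)\in(\alpha)_{\varepsilon_\alpha}\bigr)\le-\min\{D(\alpha),N\}+\delta.
\]
Extract a finite subcover of $K$ by such neighbourhoods. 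Since $\frac1T\ln$ of a finite sum has $\limsup$ equal to the maximum of the individual $\limsup$'s, we get
\[
\limsup_{T\to\infty}\frac1T\ln\mathbf P(z(T)\in K)\le -\min\{D(K),N\}+\delta\le-\min\{D([B]),N\}+\delta.
\]
Combining this with \eqref{eq2.4} bounds $\limsup_{T\to\infty}\frac1T\ln\mathbf P(z(T)\in B)$ by $\max\{-\min\{D([B]),N\}+\delta,\,-N\}$. Letting $\delta\to0$ and then $N\to\infty$ yields $-D([B])$.

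The only delicate points are the truncation at $N$, which handles $D=\infty$ on parts of $K$, and the case $K=\varnothing$, where the bound $-N$ alone suffices. As a byproduct, \eqref{eq2.4} together with the lower bound shows that $\{D\le v'\}$ is bounded, hence compact by lower semicontinuity. This is consistent with the goodness condition \eqref{eq2.3} under which the statement is quoted from~\cite{4}, but it is not needed in the argument.
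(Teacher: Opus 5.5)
Your proof is correct, but it follows a different route from the paper, because the paper gives no proof of Theorem~\ref{t2.1} of its own. It derives the statement from Theorem~1.1 of~\cite{4} under the goodness assumption~\eqref{eq2.3}, and concludes that the two principles are equivalent under \eqref{eq2.3} and \eqref{eq2.4}.

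Your argument is self-contained. The direct half uses only the interval $(\alpha)_\varepsilon$ and the lower semicontinuity built into Definition~\ref{d2.2}. The converse is the standard ``local bounds plus exponential tightness'' scheme (cf.\ \cite[Lemma~1.2.18]{5}), and the truncation $\min\{D,N\}$ correctly handles points of $K$ where $D=\infty$.

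Your route gives a slightly sharper statement. It shows that \eqref{eq2.3} is not an extra hypothesis for the converse, since, as you observe, it follows from \eqref{eq2.4} and the lower bound. This matches the theorem as literally stated, without \eqref{eq2.3}.

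The only external input you still use is the automatic lower semicontinuity of $D$ under the l.l.d.p. Your bounds themselves do not need it; it is required only so that $D$ qualifies as a rate function in Definition~\ref{d2.2}. It takes two lines to prove. If $\alpha_n\to\alpha$, then $(\alpha_n)_{\varepsilon/2}\subset(\alpha)_\varepsilon$ for all large $n$. By monotonicity in the radius, this gives $\limsup_{T\to\infty}\frac1T\ln\mathbf P(z(T)\in(\alpha)_\varepsilon)\ge-\liminf_{n}D(\alpha_n)$. Letting $\varepsilon\searrow0$ yields $D(\alpha)\le\liminf_n D(\alpha_n)$.

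The citation route is shorter. However, it leaves the reader to check that the setting of~\cite{4} covers the present one, and it formally carries the superfluous assumption~\eqref{eq2.3}.
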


Thus, the l.d.p.\ and l.l.d.p.\ are equivalent under conditions~\eqref{eq2.3} and~\eqref{eq2.4}.

\subsection{Conditional large deviation principle for increments}
\label{ss2.2}

Denote by $\{\mathcal{F}_t:=\sigma (Z(u)\colon u\le t)\colon t\in (-\infty, \infty)\}$ the filtration generated by the process~$Z$ and introduce the following uniformity concept. 

Suppose we are given a function $\{ \pi_t\colon t\ge 0\}$ and a family of events  $\{Q_{s_1, s_2}(t), C_{s_1} (t)\, $:\allowbreak $\  0\le s_1 < s_2, \, t\ge 0\}$ 
such that $Q_{s_1, s_2}(t)\in \mathcal{F}_{s_2 t}$, $C_{s_1} (t)\in \mathcal{F}_{s_1t}$. The parameter  $t$ will act as a scale factor on the time axis, whereas the values of $s_i$ will specify the locations of time epochs in the respective scale. 

Assume that, for any fixed $0\le s_1 < s_2,$ there exists a non-random function  $ \delta_{s_1, s_2}(T)\to 0$ as $T\to \infty$ such that  $\mathbf{P} (Q_{s_1, s_2}(T) \, | \, \mathcal{F}_{s_1 T}) =\pi_T(1+\Theta_{s_1, s_2}(T))$ (or $\mathbf{P} (Q_{s_1, s_2}(T) \, | \, \mathcal{F}_{sT}) =\pi_T +\Theta_{s_1, s_2}(T) $), where, for the random variables~$\Theta_{s_1, s_2}(T),$ one has  $C_{s_1}(T)\subseteq \{|\Theta_{s_1, s_2}(T)|\le \delta_{s_1, s_2}(T)\}$. In this case we will say that the relation  
\begin{align*}
\mathbf{P} (Q_{s_1, s_2}(T) \, | \, \mathcal{F}_{s_1T }) &= \pi_T (1+o(1))
\\
(\text{respectively, } \ \mathbf{P} (Q_{s_1, s_2}(T) \, | \, \mathcal{F}_{s_1T})&= \pi_T + o(1))
\end{align*}
holds  uniformly over the elementary events~$\omega\in C_{s_1}(T)$ as $T\to\infty$. The uniformity of inequalities or limiting relations for conditional probabilities or conditional expectations will be understood in a similar sense. In case of infinite limits, uniformity will be understood in the natural sense (the values are uniformly large). 

We will assume that the process $Z$ satisfies the below condition of the existence of a common fundamental function for its increments. 

\smallskip 
 
[$\mathbf{A}$].~{\it There exists an essentially smooth convex function $A\colon\mathbb{R}\to (-\infty, \infty]$ such that, for any fixed $0\le s_1 <s_2 <\infty$, $\alpha \in \mathbb{R} $, $\mu \in \operatorname{dom}A $ and a positive function $\eta=\eta_T \to 0$, $T\to\infty$, the following holds true: uniformly over elementary events}
$$
\omega \in C_{s_1}( T) := \biggl\{\biggl|\frac{Z(s_1T)}{T} - \alpha\biggr| < \eta_T \biggr\}
$$
\textit{one has}
\begin{gather*}
\frac1{T_2 - T_1} \ln \mathbf{E} \bigl( e^{\mu (Z(T_2)- \alpha T)} \bigm| \mathcal{F}_{T_1} \bigr) = A(\mu) +o(1)
\\
\textit{with } \  T_i:= s_i T, \ \ i=1,2,\ \text{ as }\ T\to\infty.
\end{gather*}

\begin{remark}
\label{r2.2}
{\rm Condition [$\mathbf{A}$]  is quite broad. It is met, for instance, in the case of the process   $Z^* (t) = Z(t) + Y(t)$, where~$Z$ is a  c.r.p.\ for which the elements of the ``driving sequence'' satisfy the moment Cram\'er condition (see e.g.\ \cite[Ch.~3]{9}), and~$Y$ is a ``noise process'' that is independent of~$Z$ and has the uniform property   $\mathbf{E} (e^{\mu Y(T)} \mid \mathcal{F}_{T_1}^Y )= e^{o(T)}$  
as $T\to\infty$, where $\{\mathcal{F}_t^Y\}$ is the filtration generated by the process~$Y$ (cf.\ e.g.\ \cite[Lemma~1\,(i)]{14}).
}
\end{remark}

\begin{theorem}
\label{t2.2}
If condition $[\mathbf{A}]$ is met then the uniform conditional l.l.d.p.\ with the rate function  $ D(\alpha) :=\mathcal L_{ A} (\alpha)$, $\alpha\in \mathbb{R},$ holds for the increments of the process~$Z\!\!:$ for any fixed $0\le s_1 <s_2 <\infty$, $\alpha, \beta\in \mathbb{R}$ and positive function  $\eta=\eta_T=o(1),$ for  $\varepsilon_T =\overline{o}(1)$ uniformly over the elementary events  $\omega \in C_{s_1}(T) $ one has 
$$
\lim_{T\to\infty}\frac1T \ln \mathbf{P} \biggl(\frac{Z(T_2)-Z(T_1)}{ T_2-T_1} \in (\beta)_{\varepsilon_T } \biggm| \mathcal{F}_{T_1}\biggr) = - (s_2 - s_1 ) D(\beta).
$$
\end{theorem}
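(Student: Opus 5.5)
The plan is to reduce the statement to a uniform, conditional version of the standard Gärtner--Ellis argument, applied to the increment $\Delta_T:=Z(T_2)-Z(T_1)$ with time length $\tau:=T_2-T_1=(s_2-s_1)T$.

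\emph{Reduction to the increment.} On $C_{s_1}(T)$ we have $Z(T_1)-\alpha T = O(\eta_T T)=o(T)$ uniformly, and $Z(T_1)$ is $\mathcal F_{T_1}$-measurable. Hence condition [$\mathbf{A}$] gives, uniformly over $\omega\in C_{s_1}(T)$ and for each fixed $\mu\in\operatorname{dom}A$,
$$
\ln \mathbf{E}\bigl(e^{\mu \Delta_T}\bigm|\mathcal F_{T_1}\bigr) = \tau A(\mu) + o(T).
$$
From here on everything is phrased for the conditional law of $\Delta_T/\tau$. For any fixed $\mu$, the $o(T)$ term is a non-random bound valid on all of $C_{s_1}(T)$. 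Every subsequent estimate uses only finitely many fixed values of $\mu$, so uniformity is inherited automatically.

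\emph{Upper bound.} By the conditional exponential Chebyshev inequality, for any $\mu\in\operatorname{dom}A$ and any $\varepsilon_T\to0$,
$$
\mathbf{P}\bigl(\Delta_T/\tau\in(\beta)_{\varepsilon_T}\bigm|\mathcal F_{T_1}\bigr)\le \exp\{-\tau(\mu\beta-A(\mu)) + \tau|\mu|\varepsilon_T + o(T)\}.
$$
Therefore $\limsup \frac1T\ln\mathbf P(\cdot\mid\mathcal F_{T_1})\le-(s_2-s_1)(\mu\beta-A(\mu))$ uniformly. Taking the supremum over $\mu$ then gives $-(s_2-s_1)D(\beta)$; this also covers $D(\beta)=\infty$ by taking $\mu$ with $\mu\beta-A(\mu)$ arbitrarily large. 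This half holds for every $\varepsilon_T\to0$.

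\emph{Lower bound.} This is the main obstacle. If $D(\beta)=\infty$ there is nothing to prove, so assume $D(\beta)<\infty$.

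First take $\beta=A'(\mu_0)$ with $\mu_0$ in the interior of $\operatorname{dom}A$. Introduce the tilted conditional measure $d\widetilde{\mathbf P}=e^{\mu_0\Delta_T}\,d\mathbf P/\mathbf E(e^{\mu_0\Delta_T}\mid\mathcal F_{T_1})$. Under $\widetilde{\mathbf P}$, the conditional log-Laplace transform of $\Delta_T$ is $\tau(A(\mu_0+\lambda)-A(\mu_0))+o(T)$ uniformly. Applying the Chebyshev bound above to the half-lines $\{\Delta_T/\tau\ge\beta+\delta\}$ and $\{\Delta_T/\tau\le\beta-\delta\}$, with small $\lambda=\pm\lambda_\delta$, shows their $\widetilde{\mathbf P}$-probabilities decay exponentially. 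The exponents are positive because $A$ is differentiable at $\mu_0$, so $A(\mu_0+\lambda)-A(\mu_0)-\lambda\beta=o(\lambda)$ while $\lambda\delta$ is linear. Hence $\widetilde{\mathbf P}(\Delta_T/\tau\in(\beta)_\delta)\to1$ uniformly for each fixed $\delta$. Undoing the tilt on this event yields
$$
\mathbf P\bigl(\Delta_T/\tau\in(\beta)_\delta\bigm|\mathcal F_{T_1}\bigr)\ge \exp\{-\tau(\mu_0\beta-A(\mu_0))-\tau|\mu_0|\delta+o(T)\},
$$
and $\mu_0\beta-A(\mu_0)=D(\beta)$.

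For a general $\beta$ with $D(\beta)<\infty$, I would use that essential smoothness makes $A'$ map the interior of $\operatorname{dom}A$ onto the relative interior of $\operatorname{dom}D$ (Rockafellar, \S26). By convexity and lower semicontinuity of $D$, I then pick points $\beta'=A'(\mu')$ arbitrarily close to $\beta$, on the segment towards an interior point, with $D(\beta')\to D(\beta)$. Since $(\beta')_{\delta/2}\subset(\beta)_\delta$ once $|\beta'-\beta|<\delta/2$, the lower bound transfers.

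\emph{Passing to $\varepsilon_T=\overline{o}(1)$.} The lower bound is first obtained for every fixed $\delta=1/k$, with error at most $1/k$ for $T\ge T_k$ uniformly on $C_{s_1}(T)$; the thresholds $T_k$ exist because all remainders are non-random. Choose $T_k\uparrow\infty$ and set $\widetilde\varepsilon_T:=1/k$ for $T\in[T_k,T_{k+1})$; this diagonal choice gives the lower bound along $\widetilde\varepsilon_T$. For $\varepsilon_T\ge c\widetilde\varepsilon_T$ the probability only increases when $c\ge1$. For $c<1$ I would rerun the diagonal choice with $\delta=c/k$, which is why the construction must be made with some slack. Together with the upper bound, which holds for all $\varepsilon_T\to0$, this gives the limit $-(s_2-s_1)D(\beta)$ as required by Definition~\ref{d2.1}.

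The delicate points are keeping the tilted concentration uniform in $\omega$ and handling $\beta$ on the boundary of $\operatorname{dom}D$.
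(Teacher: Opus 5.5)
Your proposal is correct and follows essentially the same route as the paper. The upper bound is exponential Chebyshev optimised over $\mu\in\operatorname{dom}A$. The lower bound uses the exponential tilt at $\mu_0$ with $A'(\mu_0)=\beta$, followed by Chebyshev under the tilted law. Boundary points of $\operatorname{dom}D$ are reached by approximation from inside. Your convexity/lower-semicontinuity argument along a segment replaces the paper's $\beta'=\beta-\varepsilon/2$ estimate, and your use of the relative interior also covers the degenerate case where $\operatorname{dom}D$ is a single point. Finally, $\widetilde\varepsilon_T$ is obtained by a diagonal choice.

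One detail needs tidying. Definition~\ref{d2.1} asks for a single $\widetilde\varepsilon_T$ that serves every $c>0$, so rerunning the construction with $\delta=c/k$, which produces a new function for each $c$, does not quite meet it. Build the slack in once instead: choose $T_k\uparrow\infty$ so that, for $T\ge T_k$, the lower bound holds with window $1/k^2$ and error $1/k$, and put $\widetilde\varepsilon_T=1/k$ on $[T_k,T_{k+1})$. Then $c\widetilde\varepsilon_T\ge 1/k^2$ once $k\ge 1/c$. The paper's choice $\varepsilon_T=\overline a(\lambda^{(k)})^{1/2}$ plays the same role.
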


\begin{remark}
\label{r2.3}
{\rm 
Unlike the G\"artner--Ellis theorem's conditions, the conditions of Theorem~\ref{t2.2} do not include the requirement that  $0\in (\operatorname{dom}A)$. This difference is due to the fact that above requirement was needed in the former theorem to guarantee the validity of the upper bound~\eqref{eq2.1} for  \textit{unbounded} sets~$B$ (cf.~\cite[Corollary~6.1.6]{5} and \cite[\S\,2, Remarks~1 and 2]{2}), whereas Theorem~\ref{t2.2} establishes the validity of the  \textit{local\/} l.d.p.
}
\end{remark}

\begin{proof}\!\!~{\bf of Theorem~\ref{t2.2}.} 
{\rm 
Assume for simplicity that $s_1= \alpha =0$, $s_2 = 1 $ (the proof in the general case is done in exactly the same way)  and set
$$
\zeta_T := \frac{Z(T) - Z(0)}{T}.
$$

(i)~\textit{Upper bound}. Denote by  $ \mathbf{I}(A)$ the indicator of the event~$A$. For any $\beta, \mu \in \mathbb{R}$ and $\varepsilon >0,$ one has 
$$
\mathbf{I}(\zeta_T\in (\beta )_\varepsilon )\le e^{ \mu T (\zeta_T -\beta) +|\mu|T\varepsilon }
 = e^{ -\mu T \beta +|\mu|T\varepsilon -\mu Z(0)}e^{\mu Z(T) }.
$$
Hence for any fixed  $\mu \in \operatorname{dom}A $ and $\varepsilon=\varepsilon_T = o (1)$ as  $T\to\infty$, by virtue of condition~$[\mathbf{A}]$\ we have 
\begin{align}
\notag
\mathbf{P} (\zeta_T \in (\beta)_\varepsilon \, | \, \mathcal{F}_0) &\le e^{- \mu T \beta + |\mu|T\varepsilon -\mu Z(0)} \mathbf{E}\bigl(e^{\mu Z(T)} \bigm| \mathcal{F}_0\bigr)
\\
\label{eq2.5}
& = \exp\{ -T( \mu \beta - A(\mu)+o(1))\}
\end{align}
uniformly over elementary events 
$$
\omega \in C_{0}(T):=\biggl\{\biggl|\frac{Z(0)}{T}\biggr|<\eta_T\biggr\}\quad \text{with } \ \eta_T=o(1).
$$

First consider the case where   $\beta\in \operatorname{dom}D $. If  $\beta\in (\operatorname{dom}D )$ then there exists a $\mu (\beta)\in \operatorname{dom}A$ such that $\mu(\beta) \beta - A(\mu(\beta))=D( \beta)$. It immediately follows from~\eqref{eq2.5} with  $\mu = \mu (\beta)$ that 
\begin{equation}
\label{eq2.6}
\mathbf{P} (\zeta_T \in (\beta )_\varepsilon \,| \, \mathcal{F}_0) \le \exp\{ -T( D(\beta ) +o(1))\}
\end{equation}
with the same uniformity property for the residual term~$o(1) $ as in~\eqref{eq2.5}. It follows directly from this that a uniform conditional version of a relation of the form~\eqref{eq2.2}  holds true.

Now let $\beta\in \partial\operatorname{dom}D$. Assume for the definiteness that   $\beta=\sup \operatorname{dom}D$, so that  $D(\beta )<\infty$, $D(\beta + )=\infty$. It follows from here, \eqref{eq1.2} and condition~[$\mathbf{A}$] that $\sup\operatorname{dom}A =\infty$ and $A'(\mu)\nearrow \beta$ as $\mu\to\infty$. Therefore, for any sequence $\mu_n\to\infty$ as $n\to\infty$ one has  $\mu_n \beta - A(\mu_n)\to D( \beta)$. Using the standard  ``diagonal argument'' (see e.g.\ \cite[\S\,7.16]{15}), we obtain from here and~\eqref{eq2.5} the uniform bound~\eqref{eq2.6}.

In the case where  $\beta\notin \operatorname{dom}D $, for any $N>0$ there exists a $\mu_N\in \operatorname{dom}A$ such that $\mu_N \beta - A(\mu_N)\ge N$. From~\eqref{eq2.5} with $\mu =\mu_N$ we obtain that 
$$
\mathbf{P} (\zeta_T \in (\beta )_\varepsilon \, | \, \mathcal{F}_0) \le \exp\{ -T(N +o(1))\}
$$
uniformly over elementary events  $\omega \in C_{0}(T) $, which establishes the desired bound due to the arbitrariness of~$N$.

(ii)~Now we will obtain the desired  \textit{lower bound\/} for the conditional probability on the left-hand side of~\eqref{eq2.5} when $\beta\in \operatorname{dom}D$ (note that, when $\beta\notin \operatorname{dom}D,$ there is no need for a lower bound). To this end we will use the idea from the proof of Theorem~2.3.6 in~\cite{5}, namely, we will make use of the change of measure.

Denote by~$P_T(\,{\cdot}\,):=\mathbf{P} (\zeta_T\in \,{\cdot}\, | \, \mathcal{F}_0)$ the conditional distribution of~$\zeta_T$ and, for a given $\mu\in ( \operatorname{dom}A ) $, introduce a new (random) distribution~$\widetilde{P}_{T, \mu}$ on~$\mathbb{R} $, specifying it via its density 
$$
\frac{d\widetilde{P}_{T,\mu}}{dP_T}(x) := \frac{e^{ \mu T x }}{ E_{T,\mu} }, \quad \text{where }
 \ E_{T,\mu}:=\mathbf{E}\bigl( e^{ \mu Z(T)} \bigm| \mathcal{F}_0\bigr).
$$
Then, for $\beta \in \mathbb{R}  $ and  $\varepsilon >0,$ one has
$$
P_T((\beta)_\varepsilon) =E_{T,\mu} \int_{(\beta)_\varepsilon}e^{- \mu T x} \, d\widetilde{P}_{T, \mu}(x),
$$
so that 
\begin{align}
\frac1T \ln P_T((\beta)_\varepsilon) &= \frac1T \ln E_{T,\mu} - \mu \beta + \frac1T\ln \int_{(\beta)_\varepsilon}e^{\mu T(\beta - x)} \, d\widetilde{P}_{T, \mu}(x)
\nonumber
\\
&\ge \frac1T \ln E_{T,\mu} - \mu \beta -|\mu| \varepsilon + \frac1T\ln \widetilde{P}_{T, \mu}((\beta)_\varepsilon).
\label{eq2.7}
\end{align}
Here, due to condition [$\mathbf{A}$], we have 
\begin{equation}
\label{eq2.8}
\frac1T \ln E_{T,\mu} - \mu \beta = - (\mu \beta - A(\mu)) + \Theta_\mu (T),
\end{equation}
where $\Theta_\mu (T) = o(1)$ uniformly over elementary events  $\omega \in C_{0}(T)$. In the remaining part of the proof, all the relations including conditional probabili\-ties or conditional expectations, will also has that uniformity property, which we will stop mentioning for brevity.  


Since  $\mu \beta - A(\mu)\le D(\beta)$, $\mu\in \mathbb{R}$, we have thus established that, with  $\varepsilon =\varepsilon_T =o(1)  $ as $T\to\infty, $ one has the inequality 
$$
\frac1T \ln P_T((\beta)_\varepsilon) \ge - D(\beta) +o(1) + \frac1T\ln \bigl[1- \widetilde{P}_{T, \mu}((\beta)_\varepsilon^{\mathrm{c}})\bigr].
$$
Now the lower bound will be obtained once we have shown that 
\begin{equation}
\label{eq2.9}
\widetilde{P}_{T, \mu}( (\beta)_\varepsilon^{\mathrm{c}})=o(1)\quad \text{for } \ \varepsilon=\varepsilon_T = \overline{o}(1).
\end{equation}

For a fixed $\lambda >0$ such that  $\mu + \lambda \in \operatorname{dom}A$, employing~\eqref{eq2.8} one gets
\begin{align}
\notag
&\widetilde{P}_{T, \mu}( [\beta+\varepsilon, \infty)) \le e^{- \lambda T(\beta +\varepsilon)} \int e^{\lambda Tx } \, \widetilde{P}_{T, \mu} (dx) = e^{- \lambda T(\beta +\varepsilon)} \, \frac{E_{T,\mu +\lambda}}{E_{T,\mu}}
\\
&\qquad= \exp\bigl\{T[ A(\mu+\lambda) - A(\mu) - \lambda (\beta +\varepsilon ) +\Theta_{\mu+\lambda} (T) - \Theta_\mu (T)]\bigr\}.
\label{eq2.10}
\end{align}

{ If $\beta\,{\in}\, (\operatorname{dom}D)$ then, by virtue of condition~[$\mathbf{A}$], there exists a point  $\mu\,{=}\,\mu (\beta)\,{\in}\,(\operatorname{dom}D)$} 
such that $A'(\mu)\,{=}\,\beta$ 
(recall that a differentiable convex function has a continuous derivative; see also \cite[Theorem~26.5]{12}). Therefore, with such a $\mu$, one has the equality 
$$
A(\mu +\lambda) = A(\mu) + \beta\lambda + a(\lambda)\lambda, \qquad \lambda>0,
$$
where $\overline{a}(h):= \sup_{0\le \lambda\le h}|a(\lambda)|=o(1)$ as $h\to 0$, so that the argument of the exponential in~\eqref{eq2.10} takes the form 
$$
-T\bigl(\lambda \varepsilon - \Theta_{\mu+\lambda} (T) + \Theta_{\mu} (T) + a(\lambda)\lambda\bigr).
$$
We will now show that one can choose $\varepsilon= \overline{o} (1)$ and $\lambda= \overline{o} (1)$ as $T\to\infty$ such that the above expression will tend to~$-\infty$.

Choose a   $\lambda^{(0)}>0$ such that $\mu + \lambda^{(0)}\in (\operatorname{dom}A)$, and set  $\lambda^{(k)}:= \lambda^{(0)}/k$, $k=1,2,\dots$\,. Further, one can always find   values $0=T_0<T_1 <T_2 <\cdots$ such that 
$$
\sup_{T\ge T_k} (|\Theta_{\mu+\lambda^{(k)}}(T)| + |\Theta_{\mu}(T)|) \le \overline a(\lambda^{(k)})\lambda^{(k)}, \qquad k\ge 1.
$$
Setting 
\begin{align*}
\lambda &=\lambda_T:= \sum_{k\ge 0} \lambda^{(k)} \, \mathbf{I}(T_k \le T<T_{k+1}),
\\
\varepsilon &=\varepsilon_T:= \sum_{k\ge 0} \overline a(\lambda^{(k)})^{1/2} \, \mathbf{I}(T_k \le T<T_{k+1}),
\end{align*}
we complete the desired construction.

Using $\lambda <0$, we obtain  in the same way that $\widetilde{P}_{T, \mu}( (- \infty, \beta - \varepsilon]) \to 0$. This establishes relation~\eqref{eq2.9} and hence the desired lower bound for  $P_T((\beta)_\varepsilon) $ in the case where $\beta\in (\operatorname{dom}D)$.

It remains to obtain the lower bound in the case where   $\beta\in \partial \operatorname{dom}D$. Assume for definiteness that  $\beta=\sup \operatorname{dom}D$. Fix an $\varepsilon >0$ such that 
$$
\beta':= \beta - \varepsilon' \in (\operatorname{dom}D) , \quad \text{where }\  \varepsilon' := \frac{\varepsilon}2.
$$
Using~\eqref{eq2.7} and~\eqref{eq2.8}, it is easily seen that 
\begin{align*}
\frac1T \ln  P_T((\beta)_\varepsilon) &\ge \frac1T \ln  P_T((\beta')_{\varepsilon'})
\\
&\ge \frac1T  \ln E_{T,\mu} - \mu \beta' -|\mu| \varepsilon' + \frac1T\ln \widetilde{P}_{T,\mu}((\beta')_{\varepsilon'})
\\
&\ge - D(\beta) - |\mu| \varepsilon + \Theta_\mu (T) + \frac1T\ln  \widetilde{P}_{T, \mu}((\beta')_{\varepsilon'}).
\end{align*}
Since $\beta' \in (\operatorname{dom}D)$, the above argument given for the case where  $\beta\in (\operatorname{dom}D) $, in which we now chose  $\mu = \mu (\beta')$ and a sufficiently small $\lambda$, shows that 
$$
\frac1T \ln  P_T((\beta)_\varepsilon) \ge -D(\beta) - 2 |\mu| \varepsilon
$$
for all large enough~$T$. As $\varepsilon >0$ is arbitrary, this implies the desired lower bound. Theorem~\ref{t2.2} is proved.
}
\end{proof}


\section{Functional large deviation principle}
 \label{s3}

\subsection{Conditional large deviation principle for finite-dimensional distributions}
\label{ss3.1}
First we will extend the assertion of Theorem~\ref{t2.2} to the case of finite-dimensional distributions of the process~$Z$. For given integer $K > 1$ and reals 
$$
h_1, \dots, h_K >0 \quad \text{such that }\ \sum_{j=1}^K h_j=1,
$$
introduce the partition $\boldsymbol{s}^K:=\{s_0, s_1, \dots, s_K\}$  of the interval $[0,1]$, where 
\begin{equation}
\label{eq3.1}
s_0:=0, \qquad s_k:= \sum_{j=1}^k h_j, \quad k\in [K ]:=\{1,\dots, K \},
\end{equation}
and set  $T_k: =s_k T$, $0\le k \le K$.

Further, for given $ \alpha_0, \alpha_1, \dots, \alpha_K \in \mathbb{R},$ introduce the following notations: 
\begin{gather}
\beta_k := \frac{\alpha_k-\alpha_{k-1}}{h_k}, \qquad \zeta_T^{(k)}:= \frac{ Z(T_k)- Z(T_{k-1})}{T_k - T_{k-1}},
\label{eq3.2}
\\
G_{k,T}( \varepsilon) := \bigl\{\zeta_T^{(k)} \in (\beta_k)_\varepsilon\bigr\}, \qquad k\in [K ], \quad \varepsilon >0.
\notag
\end{gather}

\begin{theorem}
\label{t3.1}
Let condition  $[\mathbf{A}]$ be met, $ D(\alpha): =\mathcal L_A(\alpha)$, $\alpha\in \mathbb{R}$, and a positive function $\eta=\eta_T=o(1)$ as $T\to\infty$ be given. Then, for $\varepsilon_T = \overline{o} (1)$, 
the   relations 
\begin{equation}
\lim_{T\to 0}\frac1T \ln \mathbf{P} \biggl(\bigcap_{k=1}^K G_{k,T} (\varepsilon_T) \biggm|\mathcal{F}_0 \biggr) = - \sum_{k=1}^K h_k D (\beta_k) 
\label{eq3.3}
\end{equation}
hold true uniformly over elementary events 
$$
\omega \in C_{0}(T):= \biggl\{\biggl|\frac{Z(0)}{T} -\alpha_0\biggr|<\eta_T\biggr\}.
$$
\end{theorem}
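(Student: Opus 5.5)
We argue by induction on $K$, peeling off the last increment and conditioning on $\mathcal{F}_{T_{K-1}}$. The base case $K=1$ is Theorem~\ref{t2.2} with $s_1=0$, $s_2=1$, $\alpha=\alpha_0$, $\beta=\beta_1$.

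For the inductive step, write $H_{K-1,T}(\varepsilon):=\bigcap_{k=1}^{K-1}G_{k,T}(\varepsilon)$. The key observation concerns the position of $Z(T_{K-1})$ on this event. On $C_0(T)\cap H_{K-1,T}(\varepsilon_T)$ we have
$$
\frac{Z(T_{K-1})}{T}=\frac{Z(0)}{T}+\sum_{k=1}^{K-1}h_k\zeta_T^{(k)}, \qquad
\biggl|\frac{Z(T_{K-1})}{T}-\alpha_{K-1}\biggr|<\eta_T+\varepsilon_T\sum_{k<K}h_k\le \eta_T+\varepsilon_T=:\eta'_T=o(1).
$$
Hence $C_0(T)\cap H_{K-1,T}(\varepsilon_T)\subseteq C_{s_{K-1}}(T)$, where the latter event is defined with $\alpha=\alpha_{K-1}$ and $\eta=\eta'_T$.

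By the tower property,
$$
\mathbf{P}\Bigl(\bigcap_{k=1}^K G_{k,T}\Bigm|\mathcal{F}_0\Bigr)=\mathbf{E}\bigl(\mathbf{I}(H_{K-1,T})\,\mathbf{P}(G_{K,T}\mid\mathcal{F}_{T_{K-1}})\bigm|\mathcal{F}_0\bigr).
$$
Theorem~\ref{t2.2}, applied with $s_1=s_{K-1}$, $s_2=1$, $\alpha=\alpha_{K-1}$, $\beta=\beta_K$ and $\eta'_T$, gives
$$
\mathbf{P}(G_{K,T}\mid\mathcal{F}_{T_{K-1}})=\exp\{-T(h_KD(\beta_K)+o(1))\}
$$
uniformly on $C_{s_{K-1}}(T)$, hence uniformly on $H_{K-1,T}\cap C_0(T)$. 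Pulling this deterministic-rate factor out of the conditional expectation reduces the problem to $\mathbf{P}(H_{K-1,T}\mid\mathcal{F}_0)$. The induction hypothesis controls that probability uniformly on $C_0(T)$, and the two rates add to give~\eqref{eq3.3}.

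The case $D(\beta_K)=\infty$ needs only the upper bound, where the $o(1)$ is read as the exponent being uniformly large, as in the paper's convention. The same pull-out argument gives the upper bound with $-\infty$.

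The main obstacle is the meaning of $\varepsilon_T=\overline{o}(1)$: a single function $\varepsilon_T$ must serve all $K$ steps simultaneously. Theorem~\ref{t2.2} supplies an admissible $\widetilde\varepsilon^{(k)}_T$ for each increment, but that $\widetilde\varepsilon^{(k)}_T$ may depend on $\eta'_T$, which in turn depends on the $\varepsilon_T$ used for earlier increments. To avoid this circularity, we apply Theorem~\ref{t2.2} at step $k$ with a \emph{fixed} localisation $\eta^{(k)}_T:=\eta_T+\widehat\varepsilon_T$, where $\widehat\varepsilon_T\to0$ is an arbitrary but fixed function. We then take $\widetilde\varepsilon_T:=\min\{\widehat\varepsilon_T,\widetilde\varepsilon^{(1)}_T,\dots,\widetilde\varepsilon^{(K)}_T\}$.

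It remains to check property (ii) of Definition~\ref{d2.1} for every $\varepsilon_T\ge c\widetilde\varepsilon_T$ with $\varepsilon_T\to0$.

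\emph{Upper bound.} Shrinking $\varepsilon_T$ only lowers the probability, so the upper bound holds for any $\varepsilon_T=o(1)$. It also tolerates an enlarged localisation $\eta_T+\varepsilon_T$, since the upper-bound part of Theorem~\ref{t2.2} holds for every $\eta=o(1)$.

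\emph{Lower bound.} Here we may replace $\varepsilon_T$ by $\min(\varepsilon_T,c\widetilde\varepsilon_T)$, which still satisfies $\ge c\widetilde\varepsilon_T$ and, after the same replacement, keeps the localisation within $\eta_T+\widehat\varepsilon_T$ (up to a constant). Monotonicity of the events in $\varepsilon$ then transfers the bound back to the original $\varepsilon_T$.

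With this bookkeeping the induction closes. The limit in~\eqref{eq3.3} is to be read as $T\to\infty$.
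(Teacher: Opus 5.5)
Your architecture (conditioning on $\mathcal{F}_{T_{K-1}}$, localising $|Z(T_{K-1})/T-\alpha_{K-1}|<\eta_T+\varepsilon_T$ on $C_0(T)\cap H_{K-1,T}(\varepsilon_T)$, pulling out the uniform factor and iterating) is the paper's. Your upper bound is also fine, because part~(i) of the proof of Theorem~\ref{t2.2} works for every $\varepsilon_T=o(1)$ and every localisation radius $o(1)$.

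\textbf{The gap.} It is in the lower bound, exactly where you flagged it. The function $\widetilde\varepsilon_T:=\min\{\widehat\varepsilon_T,\widetilde\varepsilon^{(1)}_T,\dots,\widetilde\varepsilon^{(K)}_T\}$ is not admissible. By Definition~\ref{d2.1}(ii), admissibility passes to \emph{larger} functions, not smaller ones. After you replace $\varepsilon_T$ by $c\widetilde\varepsilon_T$, nothing guarantees $c\widetilde\varepsilon_T\ge c'\widetilde\varepsilon^{(k)}_T$ for some $c'>0$. For example, this fails whenever $\widehat\varepsilon_T=o(\widetilde\varepsilon^{(k)}_T)$, and then Theorem~\ref{t2.2} gives no lower bound for $\mathbf{P}(G_{k,T}(c\widetilde\varepsilon_T)\mid\mathcal{F}_{T_{k-1}})$.

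Switching to a max does not rescue the scheme. You would lose $\widetilde\varepsilon_T\lesssim\widehat\varepsilon_T$, so the localisation would no longer fit inside $\eta_T+\widehat\varepsilon_T$. Nor can you pick $\widehat\varepsilon_T$ so that $\widetilde\varepsilon^{(k)}_T\lesssim\widehat\varepsilon_T$. The function $\widetilde\varepsilon^{(k)}_T$ is produced only after $\widehat\varepsilon_T$ is fixed, and in general it grows with the localisation radius. In the proof of Theorem~\ref{t2.2}, the width $\overline a(\lambda)^{1/2}$ must dominate $\sup_{C_0(T)}|\Theta_\mu(T)|$, and in typical examples (e.g.\ $Z(t)-Z(0)$ independent of $\mathcal{F}_0$) this is at least of the order of the localisation radius. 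So with a single common width for all increments, the circularity is genuine.

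\textbf{The fix.} The missing idea, which the paper uses, is to give the increments different widths in the lower bound, chosen sequentially.
\begin{enumerate}
\item Take $\widetilde\varepsilon^{(1)}_T$ admissible in Theorem~\ref{t2.2} for the first increment with radius $\eta_T$, and put $\eta^{(1)}_T:=\eta_T+h_1\widetilde\varepsilon^{(1)}_T$.
\item Take $\widetilde\varepsilon^{(2)}_T$ admissible for the second increment with radius $\eta^{(1)}_T$, put $\eta^{(2)}_T:=\eta^{(1)}_T+h_2\widetilde\varepsilon^{(2)}_T$, and continue in the same way.
\end{enumerate}
Since $C_0(T)\cap\bigcap_{j<k}G_{j,T}(\widetilde\varepsilon^{(j)}_T)\subseteq\{|Z(T_{k-1})/T-\alpha_{k-1}|<\eta^{(k-1)}_T\}$, your peeling argument gives $\mathbf{P}\bigl(\bigcap_{k}G_{k,T}(\widetilde\varepsilon^{(k)}_T)\bigm|\mathcal{F}_0\bigr)\ge\exp\{-T(\sum_k h_kD(\beta_k)+o(1))\}$ uniformly on $C_0(T)$. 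There is no circularity, because each $\widetilde\varepsilon^{(k)}_T$ depends only on the earlier ones.

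Only at the end do you pass to a common width. Set $\widetilde\varepsilon_T:=\max_k\widetilde\varepsilon^{(k)}_T$ (a max, not a min). Every $\varepsilon_T\ge\widetilde\varepsilon_T$ satisfies $G_{k,T}(\widetilde\varepsilon^{(k)}_T)\subseteq G_{k,T}(\varepsilon_T)$, so the bound transfers by monotonicity. For $\varepsilon_T\ge c\widetilde\varepsilon_T$ with $c<1$, run the same construction with widths $c\widetilde\varepsilon^{(k)}_T$; the localisation radii only shrink. With this in place of your min-construction, the rest of your argument goes through.
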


\begin{proof}
{\rm 
First we will derive the upper bound under the assumption that 
\begin{equation}
\label{eq3.4}
\beta_k\in (\operatorname{dom}D),\qquad k\in [K].
\end{equation}
For $\varepsilon >0$ one has 
\begin{align}
\mathbf{P} \biggl(\bigcap_{k=1}^K G_{k,T} (\varepsilon ) \biggm| \mathcal{F}_0 \biggr) &= \mathbf{E} \biggl[ \prod_{k=1}^K \mathbf{I} (G_{k,T} (\varepsilon ))\biggm| \mathcal{F}_0 \biggr]
\notag
\\
&=\mathbf{E} \biggl[ \mathbf{E} \bigl[\mathbf{I} (G_{K,T} (\varepsilon )) \bigm| \mathcal{F}_{T_{K -1}}\bigr] \prod_{k=1}^{K -1} \mathbf{I} (G_{k,T} (\varepsilon )) \biggm| \mathcal{F}_0 \biggr].
\label{eq3.5}
\end{align}
Since
$$
Z(T_{K -1}) = \sum_{k=1}^{K-1} (Z(T_k)- Z(T_{k-1})) + Z(0) = T \sum_{k=1}^{K-1} h_k\zeta_T^{(k)} + Z(0),
$$
we see that, for a positive function   $\varepsilon_T =o(1) $, on the event 
$$
B_{K -1}:= C_{0}(T) \bigcap_{k=1}^{K -1} G_{k,T} (\varepsilon_T)
$$
one has the inclusion 
\begin{equation}
\label{eq3.6}
\frac{Z(T_{K -1})}{T}\in (\alpha_{K -1})_{\varepsilon_T^{(k)}}, \quad \text{where } \ \varepsilon_T^{(k)}:= \eta_T + \sum_{k=1}^{K -1} h_k \varepsilon_T \le \eta_T + \varepsilon_T \to 0.
\end{equation}
Therefore it follows from part~(i) of the proof of Theorem~\ref{t2.2} that 
\begin{align*}
\mathbf{E} [\mathbf{I} (G_K (\varepsilon_T)) \mid \mathcal{F}_{T_{K -1}}] &= \mathbf{P} \bigl(\zeta_T^{(K )}\in (\beta_K )_{\varepsilon_T} \bigm| \mathcal{F}_{T_{K -1}}\bigr)
\\
&\le \exp\bigl\{ -T\bigl( D(\beta_K ) +o(1)\bigr)\bigr\}
\end{align*}
uniformly over elementary events  $\omega \in B_{K -1}$. Hence the conditional probability on the left-hand side of~\eqref{eq3.5} is equal to 
$$
\exp\bigl\{ -T\bigl( D(\beta_K ) +o(1)\bigr)\bigr\} \, \mathbf{E} \biggl[ \prod_{k=1}^{K -1} \mathbf{I} (G_{k,T} (\varepsilon_T)) \biggm| \mathcal{F}_0 \biggr],
$$
where the residual term $o(1)$ is uniform over elementary events $\omega \in C_{0}(T)$. 
Repeating this procedure $K -2$ times, we obtain the upper bound needed to establish relation~\eqref{eq3.3} in the case where~\eqref{eq3.4} holds.

The case where not all of the relations in~\eqref{eq3.4} hold true is considered in a similar way, using the respective arguments from part~(i) of the proof of Theorem~\ref{t2.2}. 

Now we will obtain the lower bound for the conditional probability on the left-hand side of~\eqref{eq3.3}. Clearly, it is sufficient to consider the case where  $\beta_k\in \operatorname{dom}D, $ $k\in [K]$.  

It follows from part~(ii) of the proof of Theorem~\ref{t2.2} that there exists $\widetilde \varepsilon_T^{\,(1)}= o(1)$ as $T\to \infty$ such that the relation 
$$
\mathbf{P} \bigl(G_{1,T} (\widetilde \varepsilon_T^{\,(1)}) \bigm| \mathcal{F}_0\bigr) \ge \exp\{- h_1 T (D(\beta_1)+o(1))\}
$$
holds uniformly over elementary events  $\omega \in C_{0}( T) $. Similarly, there exists  $\widetilde \varepsilon_T^{\,(2)}=o(1)$, $T\to \infty$, such that the relation 
$$
\mathbf{P} \bigl(G_{2,T} (\widetilde \varepsilon_T^{\,(2)}) \bigm|\mathcal{F}_{T_1}\bigr) \ge \exp\{- h_2 T (D(\beta_2)+o(1))\}
$$
holds  uniformly over elementary events  $ \omega\in \{ |Z(T_1) /T-\alpha_1|<\eta^{(1)}_T\} $, where $ \eta_T^{(1)}:= \eta_T + h_1 \widetilde \varepsilon_T^{\,(1)}$, and so on. 

Putting $\widetilde \varepsilon_T:=\max_{1\le k\le K }\widetilde \varepsilon_T^{\,(k)}$, for the probability on the left-hand side of~\eqref{eq3.3} one has 
\begin{align*}
&\mathbf{P} \biggl(\bigcap_{k=1}^K G_{k,T} (\widetilde \varepsilon_T) \biggm|\mathcal{F}_0 \biggr)
\\
&\qquad \ge \mathbf{P} \biggl(\bigcap_{k=1}^K G_{k,T} (\widetilde \varepsilon_T^{\,(k)}) \biggm|\mathcal{F}_0 \biggr) =\mathbf{E} \biggl[\prod_{k=1}^K \mathbf{I} \bigl(G_{k,T} (\widetilde \varepsilon_T^{\,(k)})\bigr) \biggm|\mathcal{F}_0 \biggr]
\\
&\qquad = \mathbf{E} \biggl[ \mathbf{E} \bigl[ \mathbf{I} \bigl(G_{K,T}(\widetilde \varepsilon_T^{\,(K)})\bigr) \bigm| \mathcal{F}_{T_{K -1}}\bigr] \prod_{k=1}^{K -1} \mathbf{I} \bigl(G_{k,T} (\widetilde \varepsilon_T^{\,(k)})\bigr) \biggm|\mathcal{F}_0 \biggr]
\\
&\qquad\ge \mathbf{E} \biggr[ \exp\{- h_K T (D(\beta_K )+o(1))\} \prod_{k=1}^{K -1} \mathbf{I} \bigl(G_{k,T} (\widetilde \varepsilon_T^{\,(k)})\bigr)\biggm|\mathcal{F}_0 \biggr]
\\
&\qquad = \exp\{- h_K T (D(\beta_K )+o(1))\} \, \mathbf{E} \biggl[ \prod_{k=1}^{K -1} \mathbf{I} \bigl(G_{k,T} (\widetilde \varepsilon_T^{\,(k)})\bigr)\biggm|\mathcal{F}_0 \biggr],
\end{align*}
where the residual term  $o(1) $ is uniform over  $\omega \in \{ |Z(T_{K-1}) /T-\alpha_{K-1}|<\eta^{(K-1)}_T\} $.

Repeating this ``splitting procedure'' with the remaining conditional ex\-pec\-tation further~$K -2$ times (the desired uniformity of the residual terms is guaranteed by the choice of   $\widetilde \varepsilon_T^{\,(k)}$), we obtain 
$$
\ln \mathbf{P} \biggl(\bigcap_{k=1}^K  G_{k,T} (\widetilde \varepsilon_T ) \biggm|\mathcal{F}_0 \biggr) \ge -T \sum_{k=1}^K h_k D (\beta_k)+o(T).
$$
This bound will all the more be valid if we replace  $\widetilde \varepsilon_T $ by $\varepsilon_T^* \gg \widetilde \varepsilon_T, $ $ \varepsilon_T^* =o(1)$. Therefore it will also hold when we replace  $\widetilde \varepsilon_T $ with $\varepsilon_T=\overline{o}(1)$. Theorem~\ref{t3.1} is proved.
}
\end{proof}

\subsection{Functional large deviation principle}
\label{ss3.2}

In this section, we assume that condition~[$\mathbf{A}$] is met and set $ D(\alpha): =\mathcal L_{ A} (\alpha)$, $\alpha\in \mathbb{R}$.

By assumption, the trajectories of the process   $ \{z_T(s) = Z(sT)/T\colon s\in [0,1]\}$ belong to the space $\mathbb{D}=\mathbb{D}[0,1]$ of right-continuous functions without discontinuities of the second kind on $[0,1]$. Denote  by 
$$
\|f\|:=\sup_{s\in [0,1]}|f(s)|,\qquad f\in \mathbb D,
$$
the uniform norm on that space and by 
$$
(f)_\varepsilon:= \{g\in \mathbb D \colon \|g-f\|<\varepsilon \}, \qquad \varepsilon >0,
$$
the open $\varepsilon$"=neighborhood of~$f$ in that norm. In this section, we will make use of Theorem~\ref{t3.1} to obtain the conditional l.l.d.p.\ for the  trajectories of~$z_T$. To this end, we will need several concepts from~\cite{4}, \cite{16}.

Denote by $\mathbb C= \mathbb C [0,1]$ the space of continuous functions on $[0,1]$ and by $\mathbb C_a\subset \mathbb C$ the class of absolutely continuous functions on that interval. For  $f\in \mathbb C_a$ there always exists the Lebesgue integral 
$$
I (f) := \int_{[0,1]} D(f'(s)) \, ds \le \infty.
$$

Further, for a function  $f \in \mathbb D$, we will say  the deviation integral  $J(f)$ exists provided that, for any sequence of partitions $\boldsymbol{s}^{K}$ of the form~\eqref{eq3.1} such that $\max_{k\le K} h_k\to 0$ as $K\to \infty$, there exists the limit 
$$
J(f):= \lim_{K\to \infty}\sum_{k=1}^K h_k D \biggl(\frac{f(s_k)-f(s_{k-1})}{h_k}\biggr) 
$$
that does not depend on the choice of the sequence of partitions~$\boldsymbol{s}^{K}$. The functional $J(f)$ is nothing but the Darboux integral of the interval function  
$$
F(s,t):= (t-s) D \biggl(\frac{f(t) - f(s)}{t-s}\biggr),\qquad 0\le s <t\le 1
$$
(see e.g.\ \cite[Ch.~1]{17}).   Note  also that, in the present exposition, it will be convenient for us  to drop the convention used in~\cite{4}, \cite{16} that $J(f)=\infty$  whenever $f(0)\neq 0$. 

It was proved in Theorem~2.1 in~\cite{16} that the deviation integral  $J(f)$ (finite or infinite) always exists for  $f\in \mathbb D$ and, moreover, that 
\begin{equation}
\label{eq3.7}
J(f) = \sup I (f^{\boldsymbol{s}^{K}}),
\end{equation}
where 
$$
f^{\boldsymbol{s}^K}(s) :=\frac{s_k- s}{h_k} \, f(s_{k-1}) + \frac{s- s_{k-1}}{h_k} \, f(s_k), \qquad s\in [s_{k-1}, s_k], \quad k\in [K ],
$$
is a piece-wise linear function with nodes at the points  $(s_k, f(s_k))$, $0\le k\le K$, and the supremum is taken over all finite partitions $\boldsymbol{s}^{K}$ of the interval $[0, 1]$.

\begin{remark}
\label{r3.1}
{\rm  Strictly speaking, the concept of the deviation integral was introduced in~\cite{16} in the case where~$D$ is a rate function, i.e.\ the Legendre transform of the function  $\ln \mathbf{E} e^{\mu \xi}$, $\mu \in \mathbb{R},$ for a given random variable~$\xi$. However, it is not hard to see that our function  $D=\mathcal L_A$, where $A$ is the fundamental function from condition~[$\mathbf{A}$], possesses all the properties of the rate functions that were used when studying the deviation integral in~\cite{16}.
}
\end{remark}

Since the process $Z$ is of general form,  getting results on the asymptotic behavior of  $\ln \mathbf{P} (z_T\in (f)_{\varepsilon_T})$ as $T\to\infty $ without additional assumptions enabling one to ``control''  the oscillations of the trajectory of~$z_T$ on short time intervals appears impossible. We will introduce the following simple condition. 

\smallskip 

[$\mathbf{B}$].~{\it There exist positive functions $V(t) = o(t)$ as $ t\to\infty$ and $W(h) = o(1)$ as $h \to 0$ such that, for any $T>0$ and $\delta >0,$ the following inequality  holds a.s.$:$}
$$
\sup \bigl\{|Z(uT) - Z(vT)|\colon 0\le u < v \le \min (u+\delta, 1) \bigr\} \le V(T) + W(\delta) T.
$$

\smallskip 

It is easily seen that condition  [$\mathbf{B}$] is met, in particular, for processes with ``almost Lipschitz'' trajectories, when there exist constants   $\gamma_0, \gamma_1, v_0 \ge 0$ such that 
$$
\sup_{0\le u\le v}|Z(t+u) - Z(t)| \le \gamma_0 + \gamma_1 v \quad \text{a.s. \ \ for any } \ t\ge 0,\ \ v\ge v_0.
$$
This latter condition is satisfied, for example, for  the partial sums process $Z(t)=Z(0) +\sum_{j=1}^{\lfloor t\rfloor} \xi_j$, $t\ge 0$, where $\{\xi_j\colon j\ge 1\}$  is a sequence of bounded random variables: $ \mathbf{P} (|\xi_j | \le \gamma )=1$ for some  $\gamma >0$ and all $ j\ge 1$ (in this case, one can take $\gamma_0=\gamma_1=\gamma$, $v_0=0$).

It is clear that if both conditions~[$\mathbf{A}$] and~[$\mathbf{B}$] are met then $\operatorname{dom}A = \mathbb{R}$, which is the exact analogue of condition   $[\mathbf C_\infty]$ from~\cite{16} in what concerns the properties of the deviation integral for the function~$D$. Therefore, according to Theorem~2.2\,(iv) in~\cite{16}, in this case one has 
\begin{equation}
\label{eq3.8}
J(f)=\infty \quad\text{for  } \ f\in \mathbb D\setminus \mathbb C_a.
\end{equation}
Moreover, it was established in Theorem~2.2\,(v) from the same paper that   $J(f)= I (f)$ for $f\in \mathbb C_a$.

\begin{theorem}
\label{t3.2}
{\rm(i)}~Let condition $[\mathbf{A}]$ be satisfied for the process~$Z$. Then, for any $f\in \mathbb D$,  for  $\eta_T=o(1)$, $\varepsilon_T=\overline{o} (1)$ the inequality 
$$
\limsup_{T\to \infty} \frac1T \ln \mathbf{P} \bigl( z_T \in (f)_{\varepsilon_T} \bigm| \mathcal{F}_{0}\bigr) \le -J(f) 
$$
holds uniformly over elementary events $\omega \in \{|Z(0)/T - f(0)| < \eta_T \}.$ 

{\rm (ii)}~If, moreover, condition~$[\mathbf{B}]$ is also met  
then the equality 
$$
\lim_{T\to \infty} \frac1T \ln \mathbf{P} \bigl( z_T \in ( f)_{\varepsilon_T} \bigm| \mathcal{F}_{0}\bigr) = -J(f), \qquad f\in \mathbb D,
$$
holds true with the same uniformity property. 
\end{theorem}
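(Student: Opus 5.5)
The plan is to reduce both parts to Theorem~\ref{t3.1} by comparing trajectories with their values on a finite partition $\boldsymbol{s}^K$ of $[0,1]$. We then use the representation~\eqref{eq3.7}, $J(f)=\sup I(f^{\boldsymbol{s}^K})$, together with the Darboux-integral definition of $J$, to pass from finite sums to $J(f)$.

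\emph{Upper bound (i).} Fix a partition $\boldsymbol{s}^K$ and set $\alpha_k:=f(s_k)$, $k=0,\dots,K$, with $\beta_k$ as in~\eqref{eq3.2}. If $\|z_T-f\|<\varepsilon_T$, then $|z_T(s_k)-\alpha_k|<\varepsilon_T$ for all $k$, and hence $\zeta^{(k)}_T\in(\beta_k)_{2\varepsilon_T/h_k}$. Since $K$ and the $h_k$ are fixed, $2\varepsilon_T/\min_k h_k=\overline{o}(1)$ whenever $\varepsilon_T$ is. Theorem~\ref{t3.1} (only its upper-bound half is needed, and it uses only $[\mathbf A]$) then gives, uniformly on $\{|Z(0)/T-f(0)|<\eta_T\}$,
\[
\limsup_{T\to\infty}\frac1T\ln\mathbf P\bigl(z_T\in(f)_{\varepsilon_T}\bigm|\mathcal F_0\bigr)\le-\sum_{k=1}^K h_kD(\beta_k)=-I(f^{\boldsymbol{s}^K}).
\]
Taking the supremum over partitions and using~\eqref{eq3.7} gives $-J(f)$. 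The $\overline{o}(1)$ requirement has to be handled carefully, because the admissible $\widetilde\varepsilon_T$ depends on the partition. I would take a sequence of partitions $\boldsymbol{s}^{K_m}$ with $I(f^{\boldsymbol{s}^{K_m}})\to J(f)$ (or $\ge m$ if $J(f)=\infty$), and combine the corresponding $\widetilde\varepsilon^{(m)}_T$ by the same diagonal construction used in the proof of Theorem~\ref{t2.2}. This produces one $\widetilde\varepsilon_T$ that works for every $m$.

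\emph{Lower bound (ii).} We may assume $J(f)<\infty$, so by~\eqref{eq3.8} $f\in\mathbb C_a$ and $J(f)=I(f)$. Fix $\delta$, take the uniform partition with $h_k=\delta$, and set $\alpha_k=f(s_k)$. On the event $\bigcap_k G_{k,T}(\varepsilon_T)$, together with $|Z(0)/T-f(0)|<\eta_T$, we have $|z_T(s_k)-f(s_k)|\le\eta_T+\varepsilon_T$ for all $k$. For $s\in[s_{k-1},s_k]$, condition~$[\mathbf B]$ gives $|z_T(s)-z_T(s_{k-1})|\le V(T)/T+W(\delta)$, and $|f(s)-f(s_{k-1})|\le\omega_f(\delta)$, where $\omega_f$ is the modulus of continuity of $f$. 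Therefore
\[
\|z_T-f\|\le \eta_T+\varepsilon_T+V(T)/T+W(\delta)+\omega_f(\delta).
\]
Theorem~\ref{t3.1} bounds the probability of this event below by $\exp\{-T(\sum_k\delta D(\beta_k)+o(1))\}$, and $\sum_k\delta D(\beta_k)\le I(f)$ by convexity of $D$ (Jensen on each cell), i.e.\ by~\eqref{eq3.7}. So for each fixed $\delta$ we get $\liminf_{T\to\infty}\frac1T\ln\mathbf P(z_T\in(f)_{r(\delta)+o(1)}\mid\mathcal F_0)\ge-J(f)$, where $r(\delta):=W(\delta)+\omega_f(\delta)\to0$.

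\emph{Main obstacle.} The hardest step is letting $\delta=\delta_T\to0$ jointly with $T$, so that the neighbourhood radius is $\varepsilon_T=\overline{o}(1)$ and not fixed. The $o(1)$ terms from Theorem~\ref{t3.1} are not uniform in $K=1/\delta$. I would handle this by a diagonal argument: choose $\delta_m\downarrow0$ and thresholds $T_m$ beyond which the $o(1)$ error for partition $m$ is below $1/m$, then set $\delta_T=\delta_m$ on $[T_m,T_{m+1})$ and $\widetilde\varepsilon_T:=r(\delta_T)+\eta_T+V(T)/T+\widetilde\varepsilon^{(m)}_T$. By monotonicity in the radius, the bound then extends to every $\varepsilon_T\ge c\,\widetilde\varepsilon_T$. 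The same diagonal device gives uniformity over $\omega$, since every estimate from Theorem~\ref{t3.1} is already uniform on the conditioning event.
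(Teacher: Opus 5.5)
Your proposal is correct and follows the paper's proof essentially step for step. For (i) you use the inclusion into $\bigcap_k G_{k,T}(2\varepsilon_T/\min_k h_k)$, Theorem~\ref{t3.1} and~\eqref{eq3.7}; for (ii) you reduce to $f\in\mathbb C_a$ via~\eqref{eq3.8} and combine a uniform partition, condition~$[\mathbf B]$ with the modulus of continuity, and Theorem~\ref{t3.1}. Your explicit diagonal argument in $\delta$ and $T$ spells out what the paper compresses into ``since $\varepsilon>0$ is arbitrary''.

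Two small remarks. In (i) the diagonal construction is unnecessary, because an upper bound is monotone in the radius and so holds for every $\varepsilon_T=o(1)$. In (ii) monotonicity only gives the bound for $\varepsilon_T\ge c\,\widetilde\varepsilon_T$ with $c\ge 1$, and the open ball needs a little slack. To cover every $c>0$, as Definition~\ref{d2.1} demands, use a slightly larger reference function such as $\widetilde\varepsilon_T^{\,1/2}$.
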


\begin{remark}
{\rm
As we already mentioned in the Introduction, the assertions of Theorems~\ref{t2.2}, \ref{t3.1} and \ref{t3.2} will remain true in a more general triangular array scheme as well, where the process $Z=Z^{(T)}$ itself can also depend on parameter~$T$. In such a setup, condition~[$\mathbf{B}$] looks more natural. Moreover, it can be extended to formulations involving suitable upper bounds for the conditional probabilities of large values of the oscillations of the trajectories of~$Z$ instead of a.s.\ restrictions on their behavior. We do not present possible   variants of such conditions because of their complexity, in order to avoid complicating the exposition.
}
\end{remark}

\begin{proof}\!\!~{\bf of Theorem~\ref{t3.2}.}
{\rm 
(i)~\textit{Upper bound}.~For a given funtion $f\in \mathbb D$, integer number  $K > 1 $ and partition $\boldsymbol{s}^K$ of the form~\eqref{eq3.1}, we will put 
$$
\alpha_0:= f(0), \qquad \alpha_{k}:= f(s_k ), \quad k\in [K],
$$
and make use of the notations from~\eqref{eq3.2}. Clearly, for any   $\varepsilon >0$, $k \in [K],$ one has the inclusions 
$$
\{z_T\in (f)_{\varepsilon }\} \subseteq \biggl\{ \frac{Z(T_k)-Z(T_{k-1})}{T} \in (\alpha_k -\alpha_{k-1})_{2\varepsilon} \biggr\} \subseteq \bigl\{ \zeta^{(k)}_T \in (\beta_k)_{\varepsilon^*} \bigr\},
$$
where $\varepsilon^*:= 2\varepsilon \max_{k\in [K]}h^{-1}_k$. From this it follows that 
$$
\mathbf{P} \bigl(z_T\in (f)_{\varepsilon } \bigm|\mathcal{F}_0 \bigr) \le \mathbf{P} \biggl(\bigcap_{k=1}^K G_{k,T} (\varepsilon^*) \biggm|\mathcal{F}_0 \biggr).
$$
Hence, if $\eta_T=o(1)$ and $\varepsilon_T = \overline{o} (1)$ as $T\to\infty$ then, by Theorem~\ref{t3.1}, one has 
$$
\limsup_{T\to 0}\frac1T \ln \mathbf{P} \bigl(z_T\in (f)_{\varepsilon_T } \bigm|\mathcal{F}_0 \bigr) \le -\sum_{k=1}^K h_k D (\beta_k) = - I (f^{\boldsymbol{s}^K})
$$
uniformly over elementary events  $\omega\in\{ |z_T(0)-\alpha_0|<\eta_T\}$. Therefore, by virtue of~\eqref{eq3.7}, we will also have the inequality 
$$
\limsup_{T\to 0}\frac1T \ln \mathbf{P} \bigl(z_T\in (f)_{\varepsilon_T } \bigm|\mathcal{F}_0 \bigr) \le -J(f).
$$

(ii)~\textit{Lower bound}. It follows from~\eqref{eq3.8} that we only need to consider the case where $f\in \mathbb C_a$. Put $h_k:=K^{-1}$, $k\in [K]$. By virtue of condition~[$\mathbf{B}$], for $s\in [s_{k-1}, s_k]$ one has 
\begin{equation}
\label{eq3.9}
z_T (s) - f(s) \le z_T (s_{k-1}) +V(T) T^{-1} + W( K^{-1}) - f(s_{k-1}) + \omega_f (K^{-1}),
\end{equation}
where $\omega_f (\delta ):= \sup \{ |f(s) - f(t)|\colon 0\le s < t \le \min(s+\delta, 1) \}$, $\delta >0,$ is the continuity modulus of the function~$f$.

For any given $\varepsilon>0,$ choose  $K$ so large that $W( K^{-1}) + \omega_f (K^{-1}) < \varepsilon$. Then, for  $\eta <\varepsilon$ and $T$ so large that $W(T)T^{-1}< \varepsilon$, we obtain from~\eqref{eq3.9} and a similar lower bound for the difference  $z_T (s) - f(s)$ that 
$$
\{ |z_T(0)-\alpha_0|<\eta\}\bigcap_{k=1}^K G_{k,T} (\varepsilon )\subseteq \{z_T\in (f)_{3\varepsilon}\}
$$
(cf.~\eqref{eq3.6}). Therefore on the event $\{ |z_T(0)-\alpha_0|<\eta\}\in \mathcal{F}_0$ one has the inequality 
$$
\mathbf{P} \bigl(z_T\in (f)_{3\varepsilon} \bigm|\mathcal{F}_0 \bigr) \ge \mathbf{P} \biggl(\bigcap_{k=1}^K G_{k,T} (\varepsilon )\biggm| \mathcal{F}_0 \biggr).
$$
Hence,   for  $\eta_T =o(1)$, $\varepsilon_T =\overline{o}(1)$, by Theorem~\ref{t3.1} and by virtue of~\eqref{eq3.7}, the bound   
\begin{align*}
\liminf_{T\to\infty } \frac1T\ln \mathbf{P} \bigl(z_T\in (f)_{ 3 \varepsilon} \bigm|\mathcal{F}_0 \bigr) &\ge \liminf_{T\to\infty } \frac1T\ln \mathbf{P} \biggl(\bigcap_{k=1}^K G_{k,T} (\varepsilon_T) \biggm| \mathcal{F}_0 \biggr)
\\
& = - \sum_{k=1}^K h_k D (\beta_k) = - I \bigl(f^{\boldsymbol{s}^K}\bigr) \ge - J(f)
\end{align*}
will hold uniformly over elementary events  $\omega \in \{ |z_T(0)-\alpha_0|<\eta_T\}.$  
Since  $\varepsilon >0$ is arbitrary, Theorem~\ref{t3.2} is proved.
}
\end{proof}



\begin{thebibliography}{99}

\bibitem{1}
{\sc Cram\'er, H.} 
Sur un nouveau th\'eor\`eme–limite de la th\'eorie des probabilit\'es. In: {\em Actualit\'es Scientifiques et Industrielles}, number 736 in Colloque consacr\'e \`a la th\'eorie des probabilit\'es, pp. 5--23. Hermann, Paris, 1938. [English translation: {\tt arXiv:1802.05988v4}.]

\bibitem{2}
{\sc Borovkov, A.\,A., Mogulskii, A.\, A.}
{Large deviations and testing of statistical hypotheses. I. Large deviations of sums of random vectors.}
{\em Siberian Adv. Math.} 1992, v. 2, 52--120. 

\bibitem{3}
{\sc Borovkov, A.\, A.}
{Asymptotic Analysis of Random Walks: Light-Tailed Distributions.}
{Cambridge: Cambridge Univ.\ Press, 2020.}

\bibitem{4}
{\sc Borovkov, A.\,A., Mogulskii, A.\, A.} 
On large deviation principles for random walk trajectories.~I
{\em  Theory Probab.\ Appl,} 2011,   56, 538--561.

\bibitem{5}
{\sc Dembo\,A., Zeitouni. O.} 
Large Deviations Techniques and Applications. Corr.\ printing of 2nd edn., Springer, 2009.

\bibitem{6}
{\sc Deuschel, J.\,D., and  Stroock, D.\,W.}
{Large Deviations.} Academic Press, Boston, 1989.


\bibitem{7}
{\sc Puhalskii, A.}
{Large Deviations and Idempotent Probability.}
{Boca Raton, FL: CRC Press, 2001.}


\bibitem{8}
{\sc Varadhan, S. R. S.} 
Asymptotic probabilities and differential equations.
{\em Comm. Pure Appl. Math.} 1966, 19, 261--286. 




\bibitem{9}
{\sc Borovkov, A.\,A.} 
Compound renewal processes. 
{Cambridge: Cambridge Univ.\ Press, 2020.}


\bibitem{10}
{\sc G\"artner, J.} 
On large deviations from the invariant measure. 
{\em Theor.\ Prob.\ Appl.} 1977, 22, 24--39.

\bibitem{11}
{\sc Ellis, R. S.} 
Large deviations for a general class of random vectors.
{\em Ann.\ Probab.} 1984, 12, 1--12.


\bibitem{12}
{\sc Rockafellar~R.T.}
{Convex Analysis.} 
{Princeton: Princeton Univ.\ Press, 1970. }

\bibitem{13}
{\sc Borovkov, A.\,A., Mogulskii, A.\, A.} 
On large deviation principles in metric spaces
{\em Siberian Math.~J.}  2010,  51, 989--1003.

\bibitem{14}
{\sc  Nguyen~D.\,P.,   Borovkov~K.}
Parisian ruin with random deficit-dependent delays for spectrally negative Lévy processes.
{\em Insur.\ Math.\ Econ.} 
2023, 110,   72--81.

\bibitem{15}
{\sc Royden~H.L., Fitzpatrick~P.M.} 
{Real Analysis.} 4th ed. 
{Boston: Pearson,  2010.}

\bibitem{16}
{\sc Borovkov,  A.~A.,  Mogul'ski,  A.~A.} 
Properties of a~functional of trajectories which arises in studying the probabilities of large deviations of random walks. 
{\em Siberian Math.~J.}  2011,  52, 
612--627.

\bibitem{17}
{\sc Riesz, F., Sz.-Nagy, B.}
Functional Analysis. 
{New York: Dover, 1990.}
 
 
\end{thebibliography}
\end{document}